\definecolor{Red}{cmyk}{0,1,1,0}
\definecolor{verde}{cmyk}{1,0,1,0}
\definecolor{loka}{cmyk}{.5,0,1,.5}
\definecolor{azul}{cmyk}{1,1,0,0}
\numberwithin{equation}{section}
\newcommand{\be}{\begin{equation}}
\newcommand{\ee}{\end{equation}}
\newtheorem{definition}{Definition}
\newtheorem{corolario}{Corollary}
\newtheorem{proposicao}{Proposition}
\newtheorem{exemplo}{Example}
\newtheorem{teorema}{Theorem}
\begin{document}
\title{Stability of the fractional Volterra integro-differential equation by means of $\psi-$Hilfer operator}
\author{J. Vanterler da C. Sousa$^1$}
\address{$^1$ Department of Applied Mathematics, Institute of Mathematics,
 Statistics and Scientific Computation, University of Campinas --
UNICAMP, rua S\'ergio Buarque de Holanda 651,
13083--859, Campinas SP, Brazil\newline
e-mail: {\itshape \texttt{vanterlermatematico@hotmail.com, capelas@ime.unicamp.br }}}
\author{E. Capelas de Oliveira$^1$}

\begin{abstract} In this paper, using the Riemann-Liouville fractional integral with respect to another function and the $\psi-$Hilfer fractional derivative, we propose a fractional Volterra integral equation and the fractional Volterra integro-differential equation. In this sense, for this new fractional Volterra integro-differential equation, we study the Ulam-Hyers stability and, also, the fractional Volterra integral equation in the Banach space, by means of the Banach fixed-point theorem. As an application, we present the Ulam-Hyers stability using the $\alpha$-resolvent operator in the Sobolev space $W^{1,1}(\mathbb{R}_{+},\mathbb{C})$.

\vskip.5cm
\noindent
\emph{Keywords}:  Ulam-Hyers stability, $\psi-$Hilfer fractional derivative, fractional Volterra integro-differential equation, fractional Volterra integral equation, Banach fixed-point theorem.
\newline 
MSC 2010 subject classifications. 26A33, 34A08, 34A12, 34K20, 37C25 .
\end{abstract}
\maketitle

Nowadays one of the areas of great concentration in the scientific community
of mathematical research is the fractional calculus (FC) and its
applications, specifically, in pure mathematics in which some new operators
are presented and discussed in Banach space, and in applied mathematics, in
problems involving memory effects. The concept of a non integer order
derivative is not new, instead, it is as old as the concept of a
integer-order derivative. Since then countless definitions of integrals and
fractional derivatives have appeared, particularly, by replacing the order
of the integer-order derivative with a parameter that may be non-integer.
For example, we mentioned the monographs by Kilbas et al. \cite{KSTJ},
Oldham and Spanier \cite{JERO}, Podlubny \cite{IP}, Samko et al. \cite{SAMKO}
among others. We also mention some recent papers by Katugampola \cite{katu}%
, Almeida \cite{almeida} and Sousa and Oliveira \cite{ZE1,ZE2}.

Fractional differential equations have already proved to be valuable
tools in the modeling of several physical phenomena. The solution of such
fractional differential equations, in general, allows a better evaluation of
the results in several fields of science, for example in engineering,
physics and medicine, among others \cite{JERO,SAMKO,RHM,JLEC1,validation}.
In recent years, many authors have proposed and proved results of existence
and uniqueness for the solutions of such equations, using different methods,
Kilbas et al. \cite{KSTJ}, Podlubny \cite{IP}, Zhou \cite{exis2}, Benchohra
and Lazreg \cite{exis4}, Furati and Kassim \cite{exis5}, Gu and Trujillo 
\cite{exis6}, M\^{a}agli et al. \cite{exis7}, Abbas et al. \cite{exis8},
Yang and Wang \cite{exis9}, Abbas et al. \cite{exis10}, Yong et al. \cite%
{exis11} and so on. The study of fractional differential equations receives
increasing attention and countless manuscripts have been published so far.
Furthermore, it paved way for several new lines of study, such as the 
fractional theory applied to the so-called impulsive equations \cite{imp1,imp2,imp3}.

On the other hand, in 1940, Ulam presented the stability problem of the
solutions of the functional equations regarding the stability of group
homomorphisms. After that, Hyers presented an affirmative answer to the Ulam
question in the context of Banach spaces, which was the first significant
advance in this area. Since then, a large number of articles have been
published in connection with many generalizations of the Ulam problem \cite%
{amann,chicone,hsu}. Also, several investigators used the fractional
derivatives and studied the stability of Ulam-Hyers and Ulam-Hyers-Rassias 
\cite{est4,est6,est8}. Although there are several manuscripts on stability
through fractional derivatives, we believe growth in this area is quite promising.

In this sense, Wang et al. \cite{est3} Benchohra and Lazreg \cite{est5},
Wang and Li \cite{est9} and more recently Sousa and Capelas de Oliveira \cite%
{est1,est2,ser} have investigated stabilities of Ulam-Hyers,
Ulam-Hyers-Rassias, semi-Ulam-Hyers-Rassias, among others, aiming new
results involving different types of stability.

Motivated by the paper of Janfada et al. \cite{janfada}, in this article we
have as main objectives, to study the stability of Ulam-Hyers of the
solution of the fractional Volterra integro-differential equation, Eq.(\ref%
{eq1}) and the solution of the fractional Volterra integral equation, Eq.(%
\ref{eq6}), using Banach fixed-point theorem.

In this paper, we consider the fractional Volterra integro-differential
equation 
\begin{equation}  \label{eq1}
^{H}\mathbb{D}_{0+}^{\alpha ,\beta ;\psi }x\left( t\right) =g\left(
t,x\left(t\right) \right) +\int_{0}^{t}K\left( t,s,x\left( s\right) \right)
ds
\end{equation}
where $^{H}\mathbb{D}_{a+}^{\alpha ,\beta ;\psi }(\cdot)$ is the $\psi-$%
Hilfer fractional derivative \cite{ZE1}, with $0<\alpha <1$, $0\leq \beta
\leq 1$ and $g:\left[ 0,T\right]\times \mathcal{X}\rightarrow \mathcal{X}$, $%
K:[0,T]\times[0,T]\times\mathcal{X}\rightarrow\mathcal{X}$, $x :\left[ 0,T%
\right] \rightarrow \mathcal{X} $ are continuous function and $\mathcal{X}$
is a Banach's space.

The paper is organized as follows: Section 2, presents, as preliminaries,
the definition of the $\psi-$Hilfer fractional derivative, fractional
integral of Riemann-Liouville with respect to another function and some
important results, given as theorems, as well as the spaces in which such
operators and theorems are defined. In Section 3, our main results are stated as
theorems. We first present and discuss the Ulam-Hyers stability for the solution 
of a fractional integro-differential Volterra equation and then we also discuss
the Ulam-Hyers stability for the solution of a fractional integral Volterra
equation, in both cases, using the Banach fixed-point theorem. Section 4,
presents as an example, the Ulam-Hyers stability involving the $\alpha$%
-resolvent operator in the Sobolev space. Concluding remarks close the paper.

\section{Preliminaries}

In this section we introduce the function spaces in which they are of
paramount importance to define the $\psi-$Riemann-Liouville fractional
integral and the $\psi-$Hilfer fractional derivative. In this sense, we
present two important theorems in obtaining the main results. Also, we
introduce the concept of stability of Ulam-Hyers and Ulam-Hyers-Rassias and
Banach fixed point theorem.

Let $\left[ a,b\right] $ $\left( 0<a<b<\infty \right) $ be a finite interval
on the half-axis $\mathbb{R}^{+}$ and let $C\left[ a,b\right] $ be the space
of continuous function $f$ on $\left[ a,b\right] $ with the norm \cite{ZE1} 
\begin{equation*}
\left\Vert f\right\Vert _{C\left[ a,b\right] }=\underset{t\in \left[ a,b%
\right] }{\max }\left\vert f\left( t\right) \right\vert.
\end{equation*}

The weighted space $C_{1-\gamma ;\psi }\left[ a,b\right] $ of continuous $f$
on $\left( a,b\right] $ is defined by \cite{ZE1} 
\begin{equation*}
C_{1-\gamma ;\psi }\left[ a,b\right] =\left\{ f:\left( a,b\right]
\rightarrow \mathbb{R};\left( \psi \left( t\right) -\psi \left( a\right)
\right) ^{1-\gamma }f\left( t\right) \in C\left[ a,b\right] \right\}
\end{equation*}
$0\leq \gamma <1$ with the norm 
\begin{equation*}
\left\Vert f\right\Vert _{C_{1-\gamma ;\psi }\left[ a,b\right] }=\left\Vert
\left( \psi \left( t\right) -\psi \left( a\right) \right) ^{1-\gamma
}f\left( t\right) \right\Vert _{C\left[ a,b\right] }=\underset{t\in \left[
a,b\right] }{\max }\left\vert \left( \psi \left( t\right) -\psi \left(
a\right) \right) ^{1-\gamma }f\left( t\right) \right\vert.
\end{equation*}

The weighted space $C_{\gamma ;\psi }^{n}\left[ a,b\right] $ of continuous $%
f $ on $\left( a,b\right] $ is defined by 
\begin{equation*}
C_{\gamma ;\psi }^{n}\left[ a,b\right] =\left\{ f:\left( a,b\right]
\rightarrow \mathbb{R};f\left( t\right) \in C^{n-1}\left[ a,b\right] ;\text{ 
}f^{\left( n\right) }\left( t\right) \in C_{\gamma ;\psi }\left[ a,b\right]
\right\}
\end{equation*}
$0\leq \gamma <1$ with the norm 
\begin{equation*}
\left\Vert f\right\Vert _{C_{\gamma ;\psi }^{n}\left[ a,b\right] }=\overset{
n-1}{\underset{k=0}{\sum }}\left\Vert f^{\left( k\right) }\right\Vert _{C %
\left[ a,b\right] }+\left\Vert f^{\left( n\right) }\right\Vert _{C_{\gamma
;\psi }\left[ a,b\right] }.
\end{equation*}

\begin{definition}
\textnormal{\cite{ZE1}} Let $\left( a,b\right) $ $\left( -\infty \leq
a<b\leq \infty \right) $ be a finite interval \textrm{(or infinite)} of the
real line $\mathbb{R}$ and let $\alpha >0$. Also let $\psi \left( t\right) $
be an increasing and positive monotone function on $\left( a,b\right] ,$
having a continuous derivative $\psi ^{\prime }\left( t\right)$ \textrm{{(we
denote first derivative as $\dfrac{d}{dt}\psi(t)=\psi^{\prime }(t)$)}} on $%
\left( a,b\right) $. The left-sided fractional integral of a function $f$
with respect to a function $\psi $ on $\left[ a,b\right] $ is defined by 
\begin{equation}  \label{eq2}
I_{a+}^{\alpha ;\psi }f\left( t\right) =\frac{1}{\Gamma \left( \alpha
\right) }\int_{a}^{t}\psi ^{\prime }\left( s\right) \left( \psi \left(
t\right) -\psi \left( s\right) \right) ^{\alpha -1}f\left( s\right) ds.
\end{equation}

The right-sided fractional integral is defined in an analogous form.
\end{definition}

As the aim of this paper is to present some types of the stabilities involving
a class of fractional integro-differential equations by means of $\psi-$%
Hilfer fractional operator, we introduce this fractional operator.

\begin{definition}
\textnormal{\cite{ZE1}} Let $n-1<\alpha <n$ with $n\in \mathbb{N},$ let $\ I=%
\left[ a,b\right] $ be an interval such that $-\infty \leq a<b\leq \infty $
and let $f,\psi \in C^{n}\left[ a,b\right] $ be two functions such that $%
\psi $ is increasing and $\psi ^{\prime }\left( t\right) \neq 0,$ for all $%
t\in I$. The left-sided $\psi-$Hilfer fractional derivative $^{H}\mathbb{D}%
_{a+}^{\alpha ,\beta ;\psi }\left( \cdot \right) $ of a function $f$ of
order $\alpha $ and type $0\leq \beta \leq 1,$ is defined by 
\begin{equation}  \label{eq3}
^{H}\mathbb{D}_{a+}^{\alpha ,\beta ;\psi }f\left( t\right) =I_{a+}^{\beta
\left( n-\alpha \right) ;\psi }\left( \frac{1}{\psi ^{\prime }\left(
t\right) }\frac{d}{dt}\right) ^{n}I_{a+}^{\left( 1-\beta \right) \left(
n-\alpha \right) ;\psi }f\left( t\right) .
\end{equation}

The right-sided $\psi-$Hilfer fractional derivative is defined in an
analogous form.
\end{definition}

If $x(t)$ is a given differentiable function, satisfying 
\begin{equation*}
\left\Vert ^{H}\mathbb{D}_{0+}^{\alpha ,\beta ;\psi }x\left( t\right)
-g\left( t,x\left( t\right) \right) -\int_{0}^{t}K\left( t,s,x\left(
s\right) \right) ds\right\Vert _{C_{1-\gamma ;\psi }\left[ 0,T\right] }\leq
\phi \left( t\right)
\end{equation*}
$\phi \left( t\right) >0$, $t\in \left[ 0,T\right]$, there exists a solution 
$y\left( t\right) $ of Eq.(\ref{eq1}) such that for some $C>0$; 
\begin{equation*}
\left\Vert x\left( t\right) -y\left( t\right) \right\Vert _{C_{1-\gamma
;\psi }\left[ 0,T\right] }\leq C\text{ }\phi \left( t\right)
\end{equation*}
then we say that Eq.(\ref{eq1}) has the Ulam-Hyers stability.

A similar definition can be considered for the fractional Volterra integral
equation 
\begin{equation}  \label{eq6}
x\left( t\right) =g\left( t,x\left( t\right) \right) +\frac{1}{\Gamma \left(
\alpha \right) }\int_{0}^{t}\psi ^{\prime }\left( s\right) \left( \psi
\left( t\right) -\psi \left( s\right) \right) ^{\alpha -1}K\left(
t,s,x\left( s\right) \right) ds.
\end{equation}

\begin{definition}
\label{def3}\textnormal{\cite{fixed}} We say that $d:X\times X\rightarrow %
\left[ 0,\infty \right] $ is a generalized metric on $X$ if:

\begin{enumerate}
\item $d\left( x,y\right) =0$ if and only if $x=y;$

\item $d\left( x,y\right) =d\left( y,x\right) ,$ for all $x,y\in X;$

\item $d\left( x,z\right) \leq d\left( x,y\right) +d\left( y,z\right) $ for
all $x,y,z\in X$.
\end{enumerate}
\end{definition}

\begin{proposicao}
Let $\left( X,\left\Vert \cdot \right\Vert _{C_{1-\gamma ;\psi }}\right) $
be a normed space. Then the function $d:X\times X\rightarrow \mathbb{R} $, $%
d\left( x,y\right) =\left\Vert x-y\right\Vert _{C_{1-\gamma ;\psi }}$ is a
metric on $X$. It is called the metric induced from the norm of the weighted
space.
\end{proposicao}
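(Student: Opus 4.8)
The plan is to verify directly that $d$ satisfies the three conditions in Definition \ref{def3}, using only the axioms of the norm $\left\Vert \cdot \right\Vert _{C_{1-\gamma ;\psi }}$. Since $d(x,y)$ is by definition the norm of the vector $x-y$, it is always a non-negative real number, so $d:X\times X\rightarrow \mathbb{R}$ is well defined. For condition (1), the positivity axiom of the norm gives $\left\Vert x-y\right\Vert _{C_{1-\gamma ;\psi }}=0$ if and only if $x-y=0$, that is, $x=y$; hence $d(x,y)=0$ if and only if $x=y$.

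For condition (2) I would use the absolute homogeneity of the norm: since $x-y=(-1)(y-x)$, we get $d(x,y)=\left\Vert x-y\right\Vert _{C_{1-\gamma ;\psi }}=\left\vert -1\right\vert \,\left\Vert y-x\right\Vert _{C_{1-\gamma ;\psi }}=\left\Vert y-x\right\Vert _{C_{1-\gamma ;\psi }}=d(y,x)$ for all $x,y\in X$. For condition (3) I would insert the intermediate point $y$, write $x-z=(x-y)+(y-z)$, and apply the subadditivity (triangle inequality) of the norm to obtain
\[
d(x,z)=\left\Vert (x-y)+(y-z)\right\Vert _{C_{1-\gamma ;\psi }}\leq \left\Vert x-y\right\Vert _{C_{1-\gamma ;\psi }}+\left\Vert y-z\right\Vert _{C_{1-\gamma ;\psi }}=d(x,y)+d(y,z)
\]
for all $x,y,z\in X$, which is precisely the desired triangle inequality.

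There is essentially no obstacle here: this is the classical fact that every norm induces a metric, and the argument reduces to three one-line applications of the norm axioms. The only point deserving a remark is that $d$ takes values in $[0,\infty)$ rather than in the extended half-line $[0,\infty]$ permitted by Definition \ref{def3}; this is automatic because a norm is finite by definition, so $d$ is in fact a genuine finite-valued metric, and a fortiori a generalized metric in the sense of that definition.
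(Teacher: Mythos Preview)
Your proof is correct and follows essentially the same approach as the paper: the paper also declares axioms 1 and 2 of Definition \ref{def3} to be clear and only spells out the triangle inequality, the sole difference being that the paper unpacks the weighted norm as $\left\Vert (\psi(t)-\psi(a))^{1-\gamma}(x(t)-z(t))\right\Vert_{C[0,T]}$ before applying subadditivity, whereas you invoke the abstract triangle inequality for $\left\Vert\cdot\right\Vert_{C_{1-\gamma;\psi}}$ directly. Both arguments are the standard verification that any norm induces a metric.
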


\begin{proof} The axioms 1 and 2 of Definition \ref{def3} are clear. If $x,y,z\in X$, then 
\begin{eqnarray*}
d\left( x,z\right)  &=&\left\Vert x-z\right\Vert _{C_{1-\gamma ;\psi }\left[
0,T\right] }=\left\Vert \left( \psi \left( t\right) -\psi \left( a\right)
\right) ^{1-\gamma }\left( x\left( t\right) -z\left( t\right) \right)
\right\Vert _{C\left[ 0,T\right] } \\
&\leq &\left\Vert \left( \psi \left( t\right) -\psi \left( a\right) \right)
^{1-\gamma }\left( x\left( t\right) -y\left( t\right) \right) \right\Vert _{C
\left[ 0,T\right] } \\
&&+\left\Vert \left( \psi \left( t\right) -\psi \left( a\right) \right)
^{1-\gamma }\left( y\left( t\right) -z\left( t\right) \right) \right\Vert _{C
\left[ 0,T\right] } \\
&=&d\left( x,y\right) +d\left( y,z\right) .
\end{eqnarray*}
and so the triangle inequality follows.
\end{proof}

\begin{teorema}
\label{teo1} Let $\left( X,d\right) $ be a generalized complete metric
space. Assume that $\Lambda :X\rightarrow X$ is a strictly contractive
operator with the Lipschitz constant $L<1$. If there exists a nonnegative
integer $k$ such that $d\left( \Lambda ^{k+1}x,\Lambda ^{k}x\right) <1,$ for
some $x\in X,$ then the following are true:

\begin{enumerate}
\item The sequence $\left\{ \Lambda ^{n}x\right\} $ converges to a fixed
point $x^{\ast }$ of $\Lambda ;$

\item $x^{\ast }$ is the unique fixed point of $\Lambda $ in 
\begin{equation*}
X^{\ast }=\left\{ y\in X;\text{ }d\left( \Lambda ^{k}x,y\right)
<\infty\right\}.
\end{equation*}

\item If $y\in X^{\ast }$, then $d\left( y,x^{\ast }\right) \leq \frac{1}{1-L%
}d\left( \Lambda y,y\right)$.
\end{enumerate}
\end{teorema}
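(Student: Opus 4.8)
This is the Diaz--Margolis alternative of the fixed point, and the plan is to run the classical argument while carefully tracking that the generalized metric $d$ may take the value $+\infty$; of the hypothesis I use only the consequence $d(\Lambda^{k+1}x,\Lambda^{k}x)<\infty$. First I would reduce to the case $k=0$ by replacing the base point $x$ by $\Lambda^{k}x$: the orbit $\{\Lambda^{n}(\Lambda^{k}x)\}_{n}$ is a tail of $\{\Lambda^{n}x\}_{n}$ and $X^{\ast}$ is unchanged, so from now on assume $d(\Lambda x,x)<\infty$ and $X^{\ast}=\{y\in X:\ d(x,y)<\infty\}$. Iterating the contraction bound gives $d(\Lambda^{n+1}x,\Lambda^{n}x)\le L^{n}d(\Lambda x,x)$ for every $n\ge0$, so for $m>n$ the triangle inequality (axiom (3) of Definition \ref{def3}) yields
\[
d(\Lambda^{m}x,\Lambda^{n}x)\ \le\ \sum_{j=n}^{m-1}d(\Lambda^{j+1}x,\Lambda^{j}x)\ \le\ \frac{L^{n}}{1-L}\,d(\Lambda x,x).
\]
Since $d(\Lambda x,x)<\infty$ and $0\le L<1$, the right-hand side tends to $0$, so $\{\Lambda^{n}x\}$ is Cauchy and, by completeness, converges to some $x^{\ast}\in X$. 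Taking $n=0$ above and letting $m\to\infty$ (using $d(\Lambda^{m}x,x^{\ast})\to0$ and the triangle inequality) gives $d(x,x^{\ast})\le\frac{1}{1-L}d(\Lambda x,x)<\infty$, so $x^{\ast}\in X^{\ast}$.

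Next I would verify that $x^{\ast}$ is a fixed point. The contraction property gives $d(\Lambda x^{\ast},\Lambda^{n+1}x)\le L\,d(x^{\ast},\Lambda^{n}x)\to0$, and we also have $d(\Lambda^{n+1}x,x^{\ast})\to0$; plugging these into $d(\Lambda x^{\ast},x^{\ast})\le d(\Lambda x^{\ast},\Lambda^{n+1}x)+d(\Lambda^{n+1}x,x^{\ast})$, which holds for every $n$, forces $d(\Lambda x^{\ast},x^{\ast})=0$, i.e. $\Lambda x^{\ast}=x^{\ast}$. This is item (1).

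For item (2): every $y\in X^{\ast}$ satisfies $d(y,x^{\ast})\le d(y,x)+d(x,x^{\ast})<\infty$ by the definition of $X^{\ast}$ and the bound just proved; since $x^{\ast}$ is itself a fixed point lying in $X^{\ast}$, it suffices to rule out another one. If $y\in X^{\ast}$ is a fixed point, then $d(y,x^{\ast})=d(\Lambda y,\Lambda x^{\ast})\le L\,d(y,x^{\ast})$ with $d(y,x^{\ast})<\infty$, hence $(1-L)d(y,x^{\ast})\le0$ and $y=x^{\ast}$. For item (3), let $y\in X^{\ast}$ and estimate
\[
d(y,x^{\ast})\ \le\ d(y,\Lambda y)+d(\Lambda y,\Lambda x^{\ast})\ \le\ d(\Lambda y,y)+L\,d(y,x^{\ast});
\]
since $d(y,x^{\ast})<\infty$, this rearranges to $d(y,x^{\ast})\le\frac{1}{1-L}d(\Lambda y,y)$, as claimed.

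The one genuine subtlety — where a careless proof would break — is the $+\infty$ in the range of $d$: every cancellation of the form $(1-L)d(\cdot,\cdot)\le\cdots$ and the ``uniqueness of the limit'' step used for item (1) must be preceded by a finiteness estimate, and these are supplied precisely by the hypothesis $d(\Lambda^{k+1}x,\Lambda^{k}x)<\infty$ propagated through the geometric bound of the first paragraph. The continuity of $\Lambda$ that is implicitly used when identifying $\lim\Lambda^{n+1}x$ with $\Lambda x^{\ast}$ follows at once from the Lipschitz inequality, so no extra argument is needed there.
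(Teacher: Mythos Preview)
Your argument is correct and is the standard Diaz--Margolis proof; the reduction to $k=0$, the geometric Cauchy estimate, and the finiteness bookkeeping that justifies the cancellations are all handled properly. The paper itself does not give a proof of this theorem at all --- it simply refers the reader to an external source --- so your write-up actually supplies more than the paper does. One very small cosmetic point: the hypothesis as stated is $d(\Lambda^{k+1}x,\Lambda^{k}x)<1$, not merely $<\infty$; you correctly observe that only finiteness is ever used, but you might phrase that as ``the hypothesis implies in particular $d(\Lambda^{k+1}x,\Lambda^{k}x)<\infty$, which is all we need'' rather than presenting it as a weakening.
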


\begin{proof}
See \textnormal{\cite{ZE1}}.
\end{proof}

\begin{teorema}
\label{teo2} Let $f\in C^{1}\left[ a,b\right]$, $\alpha >0$ and $0\leq \beta
\leq 1$, we have 
\begin{equation*}
^{H}\mathbb{D}_{0+}^{\alpha ,\beta ;\psi }\text{ }I_{0+}^{\alpha ;\psi
}f\left( t\right) =f\left( t\right) .
\end{equation*}
\end{teorema}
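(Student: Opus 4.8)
The plan is to peel the $\psi$-Hilfer operator apart using its definition, collapse the inner fractional integrals with the semigroup law for the $\psi$-Riemann--Liouville integral, and then finish with a fundamental-theorem-of-calculus argument adapted to the operator $\frac{1}{\psi'(t)}\frac{d}{dt}$. Since $0<\alpha<1$ we take $n=1$ in the definition \eqref{eq3}, so for any admissible $h$
\begin{equation*}
{}^{H}\mathbb{D}_{0+}^{\alpha,\beta;\psi}h(t)=I_{0+}^{\beta(1-\alpha);\psi}\left(\frac{1}{\psi'(t)}\frac{d}{dt}\right)I_{0+}^{(1-\beta)(1-\alpha);\psi}h(t).
\end{equation*}
I would apply this with $h=I_{0+}^{\alpha;\psi}f$ and use the semigroup property $I_{0+}^{\mu;\psi}I_{0+}^{\nu;\psi}=I_{0+}^{\mu+\nu;\psi}$ (from \cite{ZE1}; valid here since $f\in C^{1}$ and all orders are positive) to merge the two innermost operators, noting the arithmetic $(1-\beta)(1-\alpha)+\alpha=1-\beta(1-\alpha)$. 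Setting $\gamma:=\beta(1-\alpha)$, so that $0\le\gamma\le 1-\alpha<1$, the problem reduces to proving
\begin{equation*}
I_{0+}^{\gamma;\psi}\left(\frac{1}{\psi'(t)}\frac{d}{dt}\right)I_{0+}^{1-\gamma;\psi}f(t)=f(t).
\end{equation*}

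The heart of the argument is computing the derivative of $I_{0+}^{1-\gamma;\psi}f$. The change of variables $u=\psi(s)$ converts $I_{0+}^{1-\gamma;\psi}f(t)$ into an ordinary Riemann--Liouville integral of order $1-\gamma$ of the $C^{1}$ function $g:=f\circ\psi^{-1}$, evaluated at $\psi(t)$; since $g\in C^{1}$ and the kernel $(\psi(t)-u)^{-\gamma}$ is integrable, one may integrate by parts to move the $t$-dependence out of the singularity and then differentiate, obtaining the $\psi$-Leibniz rule
\begin{equation*}
\left(\frac{1}{\psi'(t)}\frac{d}{dt}\right)I_{0+}^{1-\gamma;\psi}f(t)=I_{0+}^{1-\gamma;\psi}\left(\frac{1}{\psi'}\frac{d}{dt}f\right)(t)+\frac{(\psi(t)-\psi(0))^{-\gamma}}{\Gamma(1-\gamma)}\,f(0).
\end{equation*}

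To conclude I would apply $I_{0+}^{\gamma;\psi}$ to both sides. On the first term the semigroup law gives $I_{0+}^{\gamma;\psi}I_{0+}^{1-\gamma;\psi}=I_{0+}^{1;\psi}$, and since $I_{0+}^{1;\psi}$ has kernel $\psi'(s)$ we get $\int_{0}^{t}f'(s)\,ds=f(t)-f(0)$. On the second term the elementary power formula $I_{0+}^{\gamma;\psi}(\psi(t)-\psi(0))^{\delta-1}=\frac{\Gamma(\delta)}{\Gamma(\delta+\gamma)}(\psi(t)-\psi(0))^{\delta+\gamma-1}$ with $\delta=1-\gamma$ yields $I_{0+}^{\gamma;\psi}(\psi(t)-\psi(0))^{-\gamma}=\Gamma(1-\gamma)$, so this term contributes exactly $f(0)$. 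Adding the two pieces gives ${}^{H}\mathbb{D}_{0+}^{\alpha,\beta;\psi}I_{0+}^{\alpha;\psi}f(t)=\bigl(f(t)-f(0)\bigr)+f(0)=f(t)$. The degenerate case $\gamma=0$ (that is, $\beta=0$) needs no Leibniz rule, since then $I_{0+}^{1-\gamma;\psi}=I_{0+}^{1;\psi}$ and $\frac{1}{\psi'(t)}\frac{d}{dt}I_{0+}^{1;\psi}f=f$ directly, while $I_{0+}^{0;\psi}$ is the identity. I expect the main obstacle to be the rigorous justification of the $\psi$-Leibniz rule, i.e. differentiating a fractional integral with a singular kernel; the substitution $u=\psi(s)$ together with integration by parts is precisely what sidesteps this, reducing it to the classical one-dimensional case. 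A secondary, routine point is verifying that $f\in C^{1}$ keeps every intermediate function continuous so that the semigroup law and the power formula are applicable.
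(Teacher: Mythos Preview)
Your argument is correct. The paper itself does not prove this statement; it simply refers the reader to an external source (and, incidentally, the citations \cite{ZE1} and \cite{fixed} attached to the proofs of Theorems~\ref{teo1} and~\ref{teo2} appear to have been interchanged). What you have written is exactly the standard route one finds in \cite{ZE1}: use the semigroup law $I_{0+}^{\mu;\psi}I_{0+}^{\nu;\psi}=I_{0+}^{\mu+\nu;\psi}$ together with the arithmetic identity $(1-\beta)(1-\alpha)+\alpha=1-\beta(1-\alpha)$ to collapse ${}^{H}\mathbb{D}_{0+}^{\alpha,\beta;\psi}I_{0+}^{\alpha;\psi}f$ to $I_{0+}^{\gamma;\psi}\bigl(\tfrac{1}{\psi'}\tfrac{d}{dt}\bigr)I_{0+}^{1-\gamma;\psi}f$ with $\gamma=\beta(1-\alpha)$, and then exploit the $C^{1}$ hypothesis---via integration by parts, or equivalently your substitution $u=\psi(s)$ reducing everything to the classical Riemann--Liouville setting---to evaluate this as $f$. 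Your self-contained derivation of the $\psi$-Leibniz rule and the final cancellation $(f(t)-f(0))+f(0)=f(t)$ are both accurate.

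One small remark: the theorem is stated for all $\alpha>0$, while you take $n=1$, i.e., $0<\alpha<1$. This is harmless here since the hypothesis $f\in C^{1}$ is only adequate for $n=1$ (higher $n$ would require $f\in C^{n}$), and the paper uses exclusively the range $0<\alpha<1$.
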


\begin{proof}
See \cite{fixed}.
\end{proof}

\begin{corolario}
\label{cor1}\textnormal{\cite{engel}} Let $A$ generate an analytic $\alpha -$%
resolvent $\left\{U_{\alpha }\left( t\right) \right\} _{t\geq 0},$ $\alpha >0
$ on a Banach space $\mathcal{X}$ and let $0<\mu <1$. If one takes $x\in 
\mathcal{X} _{\mu +1}$ and $f\in C_{0}\left( \mathbb{R} _{+}, \mathcal{X}
_{\mu }\right) ,$ then \textnormal{\ Eq.(\ref{eq1})} has a unique solution $%
u\in C_{0}^{1}\left( \mathbb{R} _{+}, \mathcal{X}_{\mu }\right) \cap C\left( 
\mathbb{R} _{+}, \mathcal{X}_{\mu +1}\right)$.
\end{corolario}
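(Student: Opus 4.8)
The statement is presented as a corollary of \cite{engel}, so the plan is not to build new machinery but to exhibit the solution of Eq.(\ref{eq1}) explicitly through the $\alpha$-resolvent family and then read off its regularity and uniqueness from the standard properties of analytic resolvents. Throughout, Eq.(\ref{eq1}) is read in its linear incarnation $^{H}\mathbb{D}_{0+}^{\alpha,\beta;\psi}u(t)=Au(t)+f(t)$ with initial value $u(0)=x$, which is the setting in which the hypotheses (that $A$ generate an analytic $\alpha$-resolvent and that $x\in\mathcal{X}_{\mu+1}$, $f\in C_0(\mathbb{R}_+,\mathcal{X}_\mu)$) are meaningful.

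First I would recall the structure theory behind the hypothesis. Saying that $A$ generates an analytic $\alpha$-resolvent $\{U_\alpha(t)\}_{t\ge 0}$ on $\mathcal{X}$ means exactly that the homogeneous problem is well posed, that $t\mapsto U_\alpha(t)$ admits a holomorphic $\mathcal{L}(\mathcal{X})$-valued extension to a sector, and that one has smoothing bounds of the type
\begin{equation*}
\|U_\alpha(t)\|_{\mathcal{L}(\mathcal{X}_\mu,\mathcal{X}_{\mu+1})}\le M\,t^{-\delta},\qquad 0<t\le 1,
\end{equation*}
with $\delta<1$ (so that the singularity at the origin is integrable), together with decay as $t\to\infty$ placing $U_\alpha(\cdot)x$ in the class $C_0$. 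These are precisely the properties catalogued in \cite{engel}.

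Next I would write the candidate solution by the variation-of-constants formula adapted to the $\psi$-Hilfer calculus,
\begin{equation*}
u(t)=U_\alpha(t)\,x+\int_0^t \psi'(s)\,\bigl(\psi(t)-\psi(s)\bigr)^{\alpha-1}\,P_\alpha(t,s)\,f(s)\,ds,
\end{equation*}
with $P_\alpha$ the associated resolvent kernel, and verify via Theorem \ref{teo2} (the identity $^{H}\mathbb{D}_{0+}^{\alpha,\beta;\psi}\,I_{0+}^{\alpha;\psi}=\mathrm{Id}$) together with the resolvent equation that $u$ solves Eq.(\ref{eq1}). Because $x\in\mathcal{X}_{\mu+1}$, the map $t\mapsto U_\alpha(t)x$ is continuous into $\mathcal{X}_{\mu+1}$ up to $t=0$; because $f$ has values in $\mathcal{X}_\mu$, the smoothing bound above exhibits the Duhamel term as the integral of an $\mathcal{X}_{\mu+1}$-valued function against an integrable singular kernel, hence continuous into $\mathcal{X}_{\mu+1}$ and, after one differentiation using the resolvent equation, continuously differentiable into $\mathcal{X}_\mu$; the $C_0$ behaviour at infinity is inherited from that of $U_\alpha$ and of $f$. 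Altogether $u\in C_0^1(\mathbb{R}_+,\mathcal{X}_\mu)\cap C(\mathbb{R}_+,\mathcal{X}_{\mu+1})$.

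For uniqueness I would take two solutions $u_1,u_2$ in the stated class, observe that $w=u_1-u_2$ solves the homogeneous problem with $w(0)=0$, and conclude $w\equiv 0$ either from the injectivity built into the well-posedness of the analytic resolvent in \cite{engel}, or directly by rewriting the homogeneous equation as $w(t)=I_{0+}^{\alpha;\psi}(Aw)(t)$ and running a Gronwall estimate in the $\mathcal{X}_\mu$ norm. I expect the one delicate point to be the regularity bootstrap for the Duhamel term: checking that the product of the singular weight $\psi'(s)(\psi(t)-\psi(s))^{\alpha-1}$ with the $t^{-\delta}$ smoothing of $P_\alpha$ still leaves an integrable singularity, so that the convolution genuinely lands in $C(\mathbb{R}_+,\mathcal{X}_{\mu+1})$ and is differentiable into $\mathcal{X}_\mu$. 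Everything else amounts to quoting the analytic-resolvent framework of \cite{engel}.
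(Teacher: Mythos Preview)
Your approach is a reasonable sketch but it is \emph{not} the route the paper records. The paper does not argue via an explicit Duhamel formula, smoothing bounds of the form $\|U_\alpha(t)\|_{\mathcal{L}(\mathcal{X}_\mu,\mathcal{X}_{\mu+1})}\le M\,t^{-\delta}$, and a Gronwall-type uniqueness argument. Instead it quotes the operator-matrix machinery of \cite{engel}: one works on the product space $D(\overline{A}):=\mathcal{X}_{\mu+1}\times C_0(\mathbb{R}_+,\mathcal{X}_\mu)$, defines the matrix family
\[
I_{-1}(t):=\begin{pmatrix} T(t) & R_{-1}(t)\\ 0 & S_{-1}(t)\end{pmatrix},\qquad t\ge 0,
\]
and takes the solution $u$ to be the first coordinate of $t\mapsto I_{-1}(t)\binom{x}{f}$ for $\binom{x}{f}\in D(\overline{A})$. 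Existence, the asserted regularity $u\in C_0^1(\mathbb{R}_+,\mathcal{X}_\mu)\cap C(\mathbb{R}_+,\mathcal{X}_{\mu+1})$, and uniqueness are then inherited in one stroke from the properties of this matrix family established in \cite{engel}, rather than being checked term by term.

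What your approach buys is concreteness: one sees the homogeneous and inhomogeneous contributions separately and can in principle track constants. What the paper's approach buys is economy: by packaging the data $(x,f)$ into a single vector and invoking a single operator family on the product space, all regularity and uniqueness statements follow from the abstract theory without having to verify the integrability of the compound singularity you flag as the ``delicate point''. Note also that your introduction of an auxiliary kernel $P_\alpha(t,s)$ and a $\psi$-weighted Duhamel integral is extraneous here: the corollary is lifted verbatim from \cite{engel}, so the $\psi$-Hilfer weights play no role in its proof, and the paper's sketch makes no reference to such a kernel.
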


The proof is obtained by taking as solution $u$ the first coordinate of $%
t\rightarrow I_{-1}\left( t\right) \left( 
\begin{array}{c}
x \\ 
f%
\end{array}
\right) $ for $\left( 
\begin{array}{c}
x \\ 
f%
\end{array}
\right) \in D\left( \overline{A}\right) ,$ where $I_{-1}\left( t\right)
:=\left( 
\begin{array}{cc}
T\left( t\right) & R_{-1}\left( t\right) \\ 
0 & S_{-1}\left( t\right)%
\end{array}
\right) $ $t\geq 0$ and $D\left( \overline{A}\right) :=\mathcal{X}_{\mu
+1}\times C_{0}\left( \mathbb{R} _{+},\mathcal{X}_{\mu }\right)$.

In this paper, using the Theorem \ref{teo1}, we shall study the Ulam-Hyers
stability of Eq.(\ref{eq1}) and Eq.(\ref{eq6}). Next, some examples of these
equation and their Ulam-Hyers stability will be considered.


\section{Hyers-Ulam stability}

In this section, we will study the Ulam-Hyers stability for the fractional
Volterra integral equation and fractional Volterra integro-differential
equation by means of $\psi-$Hilfer fractional derivative and the fractional
integral of a function with respect to another function $\psi(\cdot)$, in
the Banach's space.

\begin{teorema}
Suppose $\mathcal{X}$ a Banach's space and $L, L_{1}, L_{2}$ and $T$ are
positive constants for which $0<L_{1}+\left( L_{1}+L_{2}+L_{2}L\right) L<1$.
Let $g:\left[ 0,T\right] \times \mathcal{X} \rightarrow \mathcal{X}$, $K:%
\left[ 0,T\right] \times \left[ 0,T\right] \times \mathcal{X} \rightarrow 
\mathcal{X} $ and $\phi :\left[ 0, T \right] \rightarrow \left( 0,\infty
\right) $ be continuous and satisfying 
\begin{equation}  \label{eq8}
\left\Vert g\left( t,x\right) -g\left( t,y\right) \right\Vert _{C_{1-\gamma
;\psi }}\leq L_{1}\left\Vert x-y\right\Vert _{C_{1-\gamma ;\psi }\left[ 0,T%
\right]},
\end{equation}
\begin{equation}  \label{eq9}
\left\Vert K\left( t,s,x\left( s\right) \right) -K\left( t,s,y\left(
s\right) \right) \right\Vert _{C_{1-\gamma ;\psi }}\leq L_{2}\left\Vert
x-y\right\Vert _{C_{1-\gamma ;\psi }\left[ 0,T\right] }
\end{equation}
and 
\begin{equation}  \label{eq10}
\frac{1}{\Gamma \left( \alpha \right) }\int_{0}^{t}\psi ^{\prime }\left(
s\right) \left( \psi \left( t\right) -\psi \left( s\right) \right) ^{\alpha
-1}\phi \left( s\right) ds\leq L\phi \left( t\right)
\end{equation}
for all $s,t\in \left[ 0,T\right] $ and $x,y\in \mathcal{X}$. If $f:\left[
0,T\right] \rightarrow \mathcal{X} $ is a differentiable function satisfying 
\begin{equation}  \label{eq11}
\left\Vert ^{H}\mathbb{D}_{0+}^{\alpha ,\beta ;\psi }f\left( t\right)
-g\left(t,f\left( t\right) \right) -\int_{0}^{t}K\left( t,s,f\left( s\right)
\right)ds\right\Vert _{C_{1-\gamma ;\psi }\left[ 0,T\right] }\leq \phi
\left(t\right)
\end{equation}
$t\in \lbrack 0,T],$ then there exists a unique differentiable function $%
f_{0}:\left[ 0,T\right] \rightarrow \mathcal{X} $ such that for each $t\in %
\left[0,T\right] $ 
\begin{equation}
^{H}\mathbb{D}_{0+}^{\alpha ,\beta ;\psi }f_{0}\left( t\right) =g\left(
t,f_{0}\left( t\right) \right) +\int_{0}^{t}K\left( t,s,f_{0}\left( s\right)
\right) ds  \label{eq12}
\end{equation}%
and 
\begin{eqnarray}
&&\left\Vert ^{H}\mathbb{D}_{0+}^{\alpha ,\beta ;\psi }f\left( t\right) -%
\text{ }^{H}\mathbb{D}_{0+}^{\alpha ,\beta ;\psi }f_{0}\left( t\right)
\right\Vert _{C_{1-\gamma ;\psi }}+\left\Vert f\left( t\right) -f_{0}\left(
t\right) \right\Vert _{C_{1-\gamma ;\psi }}  \notag  \label{eq13} \\
&\leq &\frac{1+L}{1-L_{1}+\left( L_{1}+L_{2}+L_{2}L\right) L}\phi \left(
t\right) .
\end{eqnarray}
\end{teorema}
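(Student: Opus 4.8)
The strategy is to recast Eq.~\eqref{eq12} as a fixed-point equation and then to apply Theorem~\ref{teo1}. I would work in the $\mathcal{X}$-valued analogue $X$ of $C_{1-\gamma;\psi}[0,T]$ consisting of those differentiable functions for which ${}^{H}\mathbb{D}_{0+}^{\alpha,\beta;\psi}$ is defined, equipped with
\[
\rho(x,y):=\left\|{}^{H}\mathbb{D}_{0+}^{\alpha,\beta;\psi}x-{}^{H}\mathbb{D}_{0+}^{\alpha,\beta;\psi}y\right\|_{C_{1-\gamma;\psi}}+\left\|x-y\right\|_{C_{1-\gamma;\psi}}.
\]
As in the Proposition above, $\rho$ is a generalized metric, and one checks that $(X,\rho)$ is complete (a $\rho$-Cauchy sequence is $C_{1-\gamma;\psi}$-Cauchy together with its $\psi$-Hilfer derivatives, and the two limits are compatible by Theorem~\ref{teo2}). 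Define $\Lambda:X\to X$ by
\[
(\Lambda x)(t):=\frac{1}{\Gamma(\alpha)}\int_{0}^{t}\psi'(u)\,\big(\psi(t)-\psi(u)\big)^{\alpha-1}\left(g(u,x(u))+\int_{0}^{u}K(u,s,x(s))\,ds\right)du,
\]
that is, $\Lambda x$ is the $\psi$-Riemann--Liouville integral $I_{0+}^{\alpha;\psi}$ of the inhomogeneity appearing in Eq.~\eqref{eq12}. By Theorem~\ref{teo2}, ${}^{H}\mathbb{D}_{0+}^{\alpha,\beta;\psi}(\Lambda x)(t)=g(t,x(t))+\int_{0}^{t}K(t,s,x(s))\,ds$, so $x$ is a fixed point of $\Lambda$ if and only if $x$ solves Eq.~\eqref{eq12}. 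It therefore suffices to produce a unique fixed point $f_{0}\in X$ of $\Lambda$ together with quantitative control of $\rho(f,f_{0})$.

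The first substantive step is to show that $\Lambda$ is strictly contractive for $\rho$, with Lipschitz constant $L^{*}:=L_{1}+(L_{1}+L_{2}+L_{2}L)L$. Since $L^{*}=(L_{1}+L_{2}L)(1+L)$, the standing hypothesis $0<L_{1}+(L_{1}+L_{2}+L_{2}L)L<1$ is exactly $0<L^{*}<1$. For the derivative part, Theorem~\ref{teo2} gives ${}^{H}\mathbb{D}_{0+}^{\alpha,\beta;\psi}(\Lambda x)(t)-{}^{H}\mathbb{D}_{0+}^{\alpha,\beta;\psi}(\Lambda y)(t)=g(t,x(t))-g(t,y(t))+\int_{0}^{t}\big(K(t,s,x(s))-K(t,s,y(s))\big)\,ds$; applying \eqref{eq8} to the first part and \eqref{eq9} together with \eqref{eq10} to the Volterra part bounds its $C_{1-\gamma;\psi}$-norm by $(L_{1}+L_{2}L)\left\|x-y\right\|_{C_{1-\gamma;\psi}}$. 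For the function part, $(\Lambda x)(t)-(\Lambda y)(t)$ is $I_{0+}^{\alpha;\psi}$ of that very difference, so \eqref{eq10} contributes an additional factor $L$ and gives $L(L_{1}+L_{2}L)\left\|x-y\right\|_{C_{1-\gamma;\psi}}$. Adding the two contributions,
\[
\rho(\Lambda x,\Lambda y)\le (L_{1}+L_{2}L)(1+L)\left\|x-y\right\|_{C_{1-\gamma;\psi}}\le L^{*}\,\rho(x,y),
\]
so $\Lambda$ is a contraction with constant $L^{*}<1$.

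Next I would estimate $\rho(\Lambda f,f)$, which simultaneously verifies the remaining hypothesis of Theorem~\ref{teo1} (take $k=0$ and starting point $f$). Set $h:={}^{H}\mathbb{D}_{0+}^{\alpha,\beta;\psi}f-g(\cdot,f)-\int_{0}^{\cdot}K(\cdot,s,f(s))\,ds$, so that $\left\|h\right\|_{C_{1-\gamma;\psi}}\le\phi$ by \eqref{eq11}. By Theorem~\ref{teo2}, ${}^{H}\mathbb{D}_{0+}^{\alpha,\beta;\psi}\Lambda f-{}^{H}\mathbb{D}_{0+}^{\alpha,\beta;\psi}f=-h$; on the other hand $\Lambda f-f=-I_{0+}^{\alpha;\psi}h$ (using $I_{0+}^{\alpha;\psi}\,{}^{H}\mathbb{D}_{0+}^{\alpha,\beta;\psi}f=f$, the kernel term of ${}^{H}\mathbb{D}_{0+}^{\alpha,\beta;\psi}$ being absent in the present setting), whence $\left\|\Lambda f-f\right\|_{C_{1-\gamma;\psi}}=\left\|I_{0+}^{\alpha;\psi}h\right\|_{C_{1-\gamma;\psi}}\le L\phi$ by \eqref{eq10}. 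Thus $\rho(\Lambda f,f)\le(1+L)\phi<\infty$. Now Theorem~\ref{teo1} provides a unique fixed point $f_{0}\in X$ of $\Lambda$ --- equivalently, a unique differentiable function $f_{0}$ satisfying Eq.~\eqref{eq12} --- and part (3) of that theorem yields
\[
\left\|{}^{H}\mathbb{D}_{0+}^{\alpha,\beta;\psi}f-{}^{H}\mathbb{D}_{0+}^{\alpha,\beta;\psi}f_{0}\right\|_{C_{1-\gamma;\psi}}+\left\|f-f_{0}\right\|_{C_{1-\gamma;\psi}}=\rho(f,f_{0})\le\frac{1}{1-L^{*}}\,\rho(\Lambda f,f)\le\frac{1+L}{1-L^{*}}\,\phi(t),
\]
which is precisely the bound \eqref{eq13}. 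This completes the argument.

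The step I expect to be the genuine obstacle is the weighted-norm bookkeeping that makes the contraction estimate honest: controlling $\left\|I_{0+}^{\alpha;\psi}w\right\|_{C_{1-\gamma;\psi}[0,T]}$ in terms of $\left\|w\right\|_{C_{1-\gamma;\psi}[0,T]}$; justifying that the Volterra term in ${}^{H}\mathbb{D}_{0+}^{\alpha,\beta;\psi}(\Lambda x-\Lambda y)$ is governed by \eqref{eq10}, so that it genuinely contributes the factor $L$ on the $K$-contributions; and handling the reverse composition $I_{0+}^{\alpha;\psi}\,{}^{H}\mathbb{D}_{0+}^{\alpha,\beta;\psi}f=f$ (Theorem~\ref{teo2} supplies only the other order), whose boundary term, proportional to $(\psi(t)-\psi(0))^{\gamma-1}$, must be shown not to enter. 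Completeness of $(X,\rho)$ also merits a careful check. Once these technical points are in place, the chain of inequalities above delivers the constant in \eqref{eq13}.
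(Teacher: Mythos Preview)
Your overall architecture matches the paper's exactly: the same operator $\Lambda x = I_{0+}^{\alpha;\psi}\!\big(g(\cdot,x)+\int_0^{\cdot}K(\cdot,s,x(s))\,ds\big)$, the same contraction constant $L^{*}=L_{1}+(L_{1}+L_{2}+L_{2}L)L$, and the same appeal to Theorem~\ref{teo1}. The paper also proves completeness of its metric space and checks $d(\Lambda f,f)\le 1+L$ in the way you outline.

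The one structural point you have not captured is the choice of metric. The paper does \emph{not} work with your $\rho(x,y)=\|{}^{H}\mathbb{D}_{0+}^{\alpha,\beta;\psi}x-{}^{H}\mathbb{D}_{0+}^{\alpha,\beta;\psi}y\|_{C_{1-\gamma;\psi}}+\|x-y\|_{C_{1-\gamma;\psi}}$; it uses the $\phi$--weighted generalized metric
\[
d(x,y)=\inf\Bigl\{C\in[0,\infty]:\ \|{}^{H}\mathbb{D}_{0+}^{\alpha,\beta;\psi}x(t)-{}^{H}\mathbb{D}_{0+}^{\alpha,\beta;\psi}y(t)\|+\|x(t)-y(t)\|\le C\,\phi(t)\ \text{for all }t\in[0,T]\Bigr\}.
\]
This is not cosmetic. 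With your $\rho$, the quantity $\rho(x,y)$ is a single number, so the chain $\rho(f,f_0)\le\frac{1+L}{1-L^{*}}\,\phi(t)$ is a type mismatch and cannot yield the \emph{pointwise} bound \eqref{eq13}. More seriously, your contraction step invokes \eqref{eq10} to produce the factor $L$ on the Volterra and $I_{0+}^{\alpha;\psi}$ contributions, but \eqref{eq10} is a hypothesis about $\phi$ alone; in an unweighted sup-norm metric there is no $\phi$ present in $\|x-y\|$, so \eqref{eq10} gives you nothing there. In the paper's metric, by contrast, $d(x,y)\le C_{xy}$ means $\|x(s)-y(s)\|\le C_{xy}\phi(s)$ pointwise, and then \eqref{eq10} legitimately converts each integral of $\phi$ into $L\phi(t)$, which is precisely how the constant $L^{*}$ arises and how the final inequality comes out with $\phi(t)$ on the right. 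Once you replace $\rho$ by this $\phi$--weighted $d$, the ``obstacles'' you flag (the role of \eqref{eq10} on the Volterra term and the appearance of $\phi(t)$ in the conclusion) dissolve, and your sketch becomes the paper's proof.
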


\begin{proof}Consider the following set $M=\left\{ x:\left[ 0,T\right] \rightarrow \mathcal{X}, \text{ }x\text{ is differentiable}\right\} $ and define a mapping $d:M\times M\rightarrow \left[ 0,\infty \right] $ by
\begin{equation*}
d\left( x,y\right) =\inf \left\{ 
\begin{array}{c}
C\in \left[ 0,\infty \right] :\left\Vert ^{H}\mathbb{D}_{0+}^{\alpha ,\beta
;\psi }x\left( t\right) -\text{ }^{H}\mathbb{D}_{0+}^{\alpha ,\beta ;\psi
}y\left( t\right) \right\Vert _{C_{1-\gamma ;\psi }} \\ 
+\left\Vert x\left( t\right) -y\left( t\right) \right\Vert _{C_{1-\gamma
;\psi }}\leq C\phi \left( t\right) ,\text{ }t\in \left[ 0,T\right] .%
\end{array}%
\right\}  
\end{equation*}

We show that $\left( M,d\right) $ is a complete generalized metric spaces. It suffices to prove the triangle inequality and the completeness of this space. Assume that $d\left( x,y\right) >d\left( x,z\right) +d\left( z,y\right)$,
for some $x,y\in M$. Then, there exists $t_{0}\in \left[ 0,T\right] $ with
\begin{equation*}
\left\Vert ^{H}\mathbb{D}_{0+}^{\alpha ,\beta ;\psi }x\left( t_{0}\right) -\text{ }^{H}\mathbb{D}_{0+}^{\alpha ,\beta ;\psi }y\left( t_{0}\right) \right\Vert_{C_{1-\gamma ;\psi }}+\left\Vert x\left( t_{0}\right) -y\left( t_{0}\right) \right\Vert _{C_{1-\gamma ;\psi }}>\left( d\left( x,z\right) +d\left(z,y\right) \right) \phi \left( t_{0}\right) .
\end{equation*}

Thus, by definition of $d$, we get
\begin{eqnarray*}
&&\left\Vert ^{H}\mathbb{D}_{0+}^{\alpha ,\beta ;\psi }x\left( t_{0}\right) -\text{ }^{H}\mathbb{D}_{0+}^{\alpha ,\beta ;\psi }y\left( t_{0}\right) \right\Vert
_{C_{1-\gamma ;\psi }}+\left\Vert x\left( t_{0}\right) -y\left( t_{0}\right)
\right\Vert _{C_{1-\gamma ;\psi }}  \notag \\
&>&\left\Vert ^{H}\mathbb{D}_{0+}^{\alpha ,\beta ;\psi }x\left( t_{0}\right) -\text{ }^{H}D_{0+}^{\alpha ,\beta ;\psi }z\left( t_{0}\right) \right\Vert
_{C_{1-\gamma ;\psi }}+\left\Vert x\left( t_{0}\right) -z\left( t_{0}\right)
\right\Vert _{C_{1-\gamma ;\psi }}  \notag \\
&&+\left\Vert ^{H}\mathbb{D}_{0+}^{\alpha ,\beta ;\psi }z\left( t_{0}\right) -\text{ }^{H}\mathbb{D}_{0+}^{\alpha ,\beta ;\psi }y\left( t_{0}\right) \right\Vert
_{C_{1-\gamma ;\psi }}+\left\Vert z\left( t_{0}\right) -y\left( t_{0}\right)
\right\Vert _{C_{1-\gamma ;\psi }}
\end{eqnarray*}
which is a contradiction. Now, we show that $\left( M,d\right) $ is complete.

Let $\left\{ x_{n}\right\} $ be a Cauchy sequence in $\left( M,d\right) $. This, by definition of $d$, implies that $\forall \varepsilon >0,$ $\exists
N_{\varepsilon }\in \mathbb{N}$, $\forall m,n\geq N_{\varepsilon },$ $\forall t\in \left[ 0,T\right]$,
\begin{equation}\label{eq17}
\left\Vert ^{H}\mathbb{D}_{0+}^{\alpha ,\beta ;\psi }x_{n}\left( t\right) -\text{ }^{H}\mathbb{D}_{0+}^{\alpha ,\beta ;\psi }x_{m}\left( t\right) \right\Vert_{C_{1-\gamma ;\psi }}+\left\Vert x_{n}\left( t\right) -x_{m}\left( t\right)\right\Vert _{C_{1-\gamma ;\psi }}<\varepsilon \phi \left( t\right).
\end{equation}

Now, by continuity of $\phi $ on compact interval $\left[ 0,T\right]$, Eq.(\ref{eq17}) implies that $\left\{ x_{n}\right\} $ and $\left\{ ^{H}\mathbb{D}_{0+}^{\alpha ,\beta; \psi }x_{n}\right\} $ are uniformly convergent on $\left[ 0,T\right]$. So there exists a differentiable function $x$ such that $\left\{ x_{n}\right\} $ and $\left\{ ^{H}\mathbb{D}_{0+}^{\alpha, \beta; \psi }x_{n}\right\} $ are uniformly convergent to $x$ and $^{H}\mathbb{D}_{0+}^{\alpha, \beta; \psi }x$, respectively.

Hence $x\in M$ and from Eq.(\ref{eq17}), letting $m\rightarrow \infty $, we have $ \forall \varepsilon >0$, $\exists N_{\varepsilon }\in \mathbb{N}$, $\forall n\geq N_{\varepsilon }$, $\forall t\in \left[ 0,T\right] $
\begin{equation*}
\left\Vert ^{H}\mathbb{D}_{0+}^{\alpha ,\beta ;\psi }x_{n}\left( t\right) -\text{ }^{H}\mathbb{D}_{0+}^{\alpha ,\beta ;\psi }x\left( t\right) \right\Vert _{C_{1-\gamma;\psi }}+\left\Vert x_{n}\left( t\right) -x\left( t\right) \right\Vert_{C_{1-\gamma ;\psi }}<\varepsilon \phi \left( t\right) .
\end{equation*}

Consequently, $\forall \varepsilon >0,$ $\exists N_{\varepsilon }\in  \mathbb{N}$, $\forall n\geq N_{\varepsilon }$, $d\left( x_{n},x\right) \leq \varepsilon $ and so $\left( M,d\right) $ is complete. Now define, $\Lambda
:M\rightarrow M$, by
\begin{equation*}
\Lambda \left( x\left( t\right) \right) =I_{0+}^{\alpha ;\psi }g\left( x\left( t\right) \right) +I_{0+}^{\alpha ;\psi }\left[ \int_{0}^{t}K\left( \tau ,s,x\left( s\right) \right) ds\right].
\end{equation*}

First, we show that $\Lambda $ is strictly contractive. Suppose $x,y\in \mathbb{N}$, $C_{xy}\in \left[ 0,\infty \right] $ and $d\left( x,y\right) \leq C_{xy}$. Thus, for all $t\in \left[ 0,T\right]$,
\begin{equation*}
\left\Vert ^{H}\mathbb{D}_{0+}^{\alpha ,\beta ;\psi }x\left( t\right) -\text{ }^{H}\mathbb{D}_{0+}^{\alpha ,\beta ;\psi }y\left( t\right) \right\Vert _{C_{1-\gamma ;\psi }}+\left\Vert x\left( t\right) -y\left( t\right) \right\Vert_{C_{1-\gamma ;\psi }}<C_{xy}\text{ }\phi \left( t\right) .
\end{equation*}

Hence, by Eq.(\ref{eq8}), Eq.(\ref{eq9}) and Eq.(\ref{eq10}), we get
\begin{eqnarray*}
&&\left\Vert ^{H}\mathbb{D}_{0+}^{\alpha ,\beta ;\psi }\left( \Lambda
x\left( t\right) -\Lambda y\left( t\right) \right) \right\Vert _{C_{1-\gamma
;\psi }}+\left\Vert \Lambda x\left( t\right) -\Lambda y\left( t\right)
\right\Vert _{C_{1-\gamma ;\psi }} \\
&\leq &\left\Vert g\left( t,x\left( t\right) \right) -g\left( t,y\left(
t\right) \right) \right\Vert _{C_{1-\gamma ;\psi }}+\int_{0}^{t}\left\Vert
K\left( t,s,x\left( s\right) \right) -K\left( t,s,y\left( s\right) \right)
\right\Vert _{C_{1-\gamma ;\psi }}ds \\
&&+\left\Vert I_{0+}^{\alpha ;\psi }\left( g\left( t,x\left( t\right)
\right) -g\left( t,y\left( t\right) \right) \right) \right\Vert
_{C_{1-\gamma ;\psi }} \\
&&+\left\Vert I_{0+}^{\alpha ;\psi }\left[ \int_{0}^{t}\left( K\left( \tau
,s,x\left( s\right) \right) -K\left( \tau ,s,y\left( s\right) \right)
\right) \right] ds\right\Vert _{C_{1-\gamma ;\psi }} \\
&\leq &L_{1}\left\Vert x\left( t\right) -y\left( t\right) \right\Vert
_{C_{1-\gamma ;\psi }}+L_{2}\int_{0}^{t}\left\Vert x\left( \tau \right)
-y\left( \tau \right) \right\Vert _{C_{1-\gamma ;\psi }}d\tau  \\
&&+I_{0+}^{\alpha ;\psi }\left( L_{1}\left\Vert x\left( t\right) -y\left(
t\right) \right\Vert _{C_{1-\gamma ;\psi }}\right)+I_{0+}^{\alpha ;\psi }\left[ \int_{0}^{t}L_{2}\left\Vert x\left( t\right) -y\left( t\right) \right\Vert _{C_{1-\gamma ;\psi }}ds\right]  \\
&\leq &L_{1}C_{xy}\phi \left( t\right) +L_{2}C_{xy}\int_{0}^{t}\phi \left(
\tau \right) d\tau +L_{1}C_{xy}I_{0+}^{\alpha ;\psi }\phi \left( t\right)
+L_{2}I_{0+}^{\alpha ;\psi }\left[ C_{xy}\int_{0}^{t}\phi \left( s\right) ds%
\right]  \\
&\leq &\left[ L_{1}+\left( L_{2}+L_{1}+L_{2}L\right) L\right] C_{xy}\phi
\left( t\right) .
\end{eqnarray*}

Thus, implies that
\begin{equation}\label{eq21}
d\left( \Lambda x,\Lambda y\right) \leq \left[ L_{1}+\left( L_{2}+L_{1}+L_{2}L\right) L\right] d\left( x,y\right) .
\end{equation}

So $\Lambda $ is strictly contractive, since $0<L_{1}+\left( L_{2}+L_{1}+L_{2}L\right) L<1$. On the other hand, trivially $f\in M$ and by Eq.(\ref{eq11}), we get
\begin{eqnarray*}
&&\left\Vert ^{H}\mathbb{D}_{0+}^{\alpha ,\beta ;\psi }\left( \Lambda f\left(t\right) -f\left( t\right) \right) \right\Vert _{C_{1-\gamma ;\psi
}}+\left\Vert \Lambda f\left( t\right) -f\left( t\right) \right\Vert
_{C_{1-\gamma ;\psi }}  \notag \\
&=&\left\Vert f\left( t,x\left( t\right) \right) +\int_{0}^{t}K\left( t
,s,x\left( s\right) \right) ds-\text{ }^{H}\mathbb{D}_{0+}^{\alpha ,\beta ;\psi }f\left( t\right) \text{ }\right\Vert _{C_{1-\gamma ;\psi }} \notag \\
&&+\left\Vert I_{0+}^{\alpha ;\psi }f\left( t,x\left( t\right) \right)
+I_{0+}^{\alpha ;\psi }\left[ \int_{0}^{t}K\left( \tau ,s,x\left( s\right)
\right) ds\right] -f\left( t\right) \right\Vert _{C_{1-\gamma ;\psi }} 
\notag \\
&\leq &\phi \left( t\right) +\left\Vert I_{0+}^{\alpha ;\psi }f\left(
t,x\left( t\right) \right) +I_{0+}^{\alpha ;\psi }\left[ \int_{0}^{t}K\left(
\tau ,s,x\left( s\right) \right) ds\right] -f\left( t\right) \right\Vert
_{C_{1-\gamma ;\psi }}  \notag \\
&\leq &\left( 1+L\right) \phi \left( t\right) .
\end{eqnarray*}

Consequently,
\begin{equation}\label{eq22}
d\left( \Lambda f,f\right) \leq 1+L<\infty ,\text{ }L<1.
\end{equation}

By means of the item 2 of Theorem \ref{teo1}, there exists a unique element $f_{0}\in M^{\ast }=\left\{ y\in M:d\left( \Lambda f,y\right) <\infty \right\} $ such that $\Lambda f_{0}=f_{0}$ or equivalent
\begin{equation}\label{eq23}
f_{0}\left( t\right) =I_{0+}^{\alpha ;\psi }\left( g\left( t,f_{0}\left( t\right) \right) \right) +I_{0+}^{\alpha ;\psi }\left[ \int_{0}^{t}K\left( \tau ,s,f_{0}\left( s\right) \right) ds\right].
\end{equation}

Note that, $f_{0}$ is differentiable and $g,k$ are continuous, then applying the $\psi-$Hilfer fractional derivative $^{H}\mathbb{D}_{0+}^{\alpha ,\beta ;\psi }(\cdot)$ on both sides of Eq.(\ref{eq23}) and by means of Theorem \ref{teo2}, we have 
\begin{equation*}
^{H}\mathbb{D}_{0+}^{\alpha ,\beta ;\psi }f_{0}\left( t\right) =g\left( t,f_{0}\left(t\right) \right) +\int_{0}^{t}K\left( t,s,f_{0}\left( s\right) \right)ds.
\end{equation*}

Also, from item 2 of Theorem \ref{teo1} and Eq.(\ref{eq22}), we have
\begin{eqnarray*}
d\left( f,f_{0}\right) &\leq &\frac{1}{1-\left[ L_{1}+\left( L_{2}+L_{1}+L_{2}L\right) L\right] }d\left( \Lambda f,f\right)  \notag \\ &\leq &\frac{1+L}{1-\left[ L_{1}+\left( L_{2}+L_{1}+L_{2}L\right) L\right] }.
\end{eqnarray*}

In view of definition of $d$ we can conclude that the inequality Eq.(\ref{eq13}) holds, for all $t\in \left[ 0,T\right]$. First, we consider
\begin{equation*}
\xi =\frac{1+L}{1-\left[ L_{1}+\left( L_{2}+L_{1}+L_{2}L\right) L\right] }.
\end{equation*}

Let $h$ be another differentiable function satisfying Eq.(\ref{eq12}) and Eq.(\ref{eq13}). Then $f\in M,$ $d\left( f,h\right) <\xi$, and
\begin{equation}\label{eq27}
^{H}\mathbb{D}_{0+}^{\alpha ,\beta ;\psi }h\left( t\right) =g\left( t,h\left(t\right) \right) +\int_{0}^{t}K\left( t,s,h\left( s\right) \right) ds.
\end{equation}

Thus, we prove the uniqueness of $f_{0}$. To this end it is enough to show that $h$ is a fixed point of $\Lambda $ and $h\in M^{\ast }$. Using Eq.(\ref{eq27}), on can see that $\Lambda h=h$. We show that $d\left( \Lambda f,h\right) <\infty$. From Eq.(\ref{eq27}) and the fact $d\left( f,h\right) <\xi$, we get
\begin{eqnarray*}
&&\left\Vert ^{H}\mathbb{D}_{0+}^{\alpha ,\beta ;\psi }\left( \Lambda
f\left( t\right) -h\left( t\right) \right) \right\Vert _{C_{1-\gamma ;\psi
}}+\left\Vert \Lambda f\left( t\right) -h\left( t\right) \right\Vert
_{C_{1-\gamma ;\psi }}  \notag \\
&=&\left\Vert g\left( t,f\left( t\right) \right) +\int_{0}^{t}K\left(
t,s,f\left( s\right) \right) ds-g\left( t,h\left( t\right) \right)
-\int_{0}^{t}K\left( t,s,h\left( s\right) \right) ds\right\Vert
_{C_{1-\gamma ;\psi }}  \notag \\
&&+\left\Vert 
\begin{array}{c}
I_{0+}^{\alpha ;\psi }g\left( t,f\left( t\right) \right) +I_{0+}^{\alpha
;\psi }\left[ \displaystyle\int_{0}^{t}K\left( \tau,s,f\left( s\right) \right) ds\right]  \\ 
-I_{0+}^{\alpha ;\psi }g\left( t,h\left( t\right) \right) -I_{0+}^{\alpha
;\psi }\left[ \displaystyle\int_{0}^{t}K\left( \tau,s,h\left( s\right) \right) ds\right] 
\end{array}%
\right\Vert _{C_{1-\gamma ;\psi }}  \notag \\
&\leq &\left\Vert g\left( t,f\left( t\right) \right) -g\left( t,h\left(
t\right) \right) \right\Vert _{C_{1-\gamma ;\psi }}+\int_{0}^{t}\left\Vert
K\left( t,s,f\left( s\right) \right) -K\left( t,s,h\left( s\right) \right)
\right\Vert _{C_{1-\gamma ;\psi }}ds  \notag \\
&&+\left\Vert I_{0+}^{\alpha ;\psi }\left[ g\left( t,f\left( t\right)
\right) -g\left( t,h\left( t\right) \right) \right] \right\Vert
_{C_{1-\gamma ;\psi }}  \notag \\
&&+\left\Vert I_{0+}^{\alpha ;\psi }\left[ \int_{0}^{t}\left[ K\left(
\tau,s,f\left( s\right) \right) -K\left( \tau,s,h\left( s\right) \right) \right] ds%
\right] \right\Vert _{C_{1-\gamma ;\psi }}  \notag \\
&\leq &L_{1}\left\Vert f\left( t\right) -h\left( t\right) \right\Vert
_{C_{1-\gamma ;\psi }}+L_{2}\int_{0}^{t}\left\Vert f\left( s\right) -h\left(
s\right) \right\Vert _{C_{1-\gamma ;\psi }}ds  \notag \\
&&+L_{1}\text{ }I_{0+}^{\alpha ;\psi }\left( \left\Vert f\left( t\right)
-h\left( t\right) \right\Vert _{C_{1-\gamma ;\psi }}\right) +L_{2}\text{ }%
I_{0+}^{\alpha ;\psi }\left( \int_{0}^{t}\left\Vert f\left( s\right)
-h\left( s\right) \right\Vert _{C_{1-\gamma ;\psi }}ds\right)   \notag \\
&\leq &\left[ L_{1}+\left( L_{2}+L_{1}+L_{2}L\right) L\right] \xi 
\end{eqnarray*}
which implies that $d\left( \Lambda f,h\right) \leq \left[ L_{1}+\left(
L_{2}+L_{1}+L_{2}L\right) L\right] \xi <\infty$.
\end{proof}

The next step of the main result of this article, is the proof of the
stability of the fractional Volterra integral equation.

\begin{teorema}
Suppose $\mathcal{X}$ a Banach's space and $L,L_{1},L_{2\text{ }}$ and $T$
are positive constants for which $0<\left( L_{1}+L_{2}\right) L<1$. Let $g:%
\left[ 0,T\right] \times \mathcal{X} \rightarrow \mathcal{X}$, $K:\left[ 0,T%
\right] \times \left[ 0,T\right] \times \mathcal{X} \rightarrow \mathcal{X} $
and $\phi :\left[ 0,T\right] \rightarrow \left( 0,\infty \right) $ be
continuous functions satisfying 
\begin{equation}  \label{eq88}
\left\Vert g\left( t,x\right) -g\left( t,y\right) \right\Vert _{C_{1-\gamma
;\psi }}\leq L_{1}\left\Vert x-y\right\Vert _{C_{1-\gamma ;\psi }},
\end{equation}
\begin{equation}  \label{eq29}
\left\Vert K\left( t,s,x\right) -K\left( t,s,y\right) \right\Vert
_{C_{1-\gamma ;\psi }}\leq L_{2}\left\Vert x-y\right\Vert _{C_{1-\gamma
;\psi }}
\end{equation}
and 
\begin{equation}  \label{eq90}
\frac{1}{\Gamma \left( \alpha \right) }\int_{0}^{t}\psi ^{\prime }\left(
s\right) \left( \psi \left( t\right) -\psi \left( s\right) \right) ^{\alpha
-1}\phi \left( s\right) ds\leq L\phi \left( t\right) ,
\end{equation}
for all $s,t\in \left[ 0,T\right] $ and $x,y\in \mathcal{X} $. If $f:\left[
0,T\right] \rightarrow \mathcal{X} $ is a continuous function satisfying 
\begin{equation}  \label{eq30}
\left\Vert f\left( t\right) -g\left( t,f\left( t\right) \right) -\frac{1}{%
\Gamma \left( \alpha \right) }\int_{0}^{t}\psi ^{\prime }\left( s\right)
\left( \psi \left( t\right) -\psi \left( s\right) \right) ^{\alpha
-1}K\left( t,s,f\left( s\right) \right) ds\right\Vert _{C_{1-\gamma ;\psi
}}\leq \phi \left( t\right),
\end{equation}
$\text{ }t\in \left[ 0,T\right]$, then there exists a unique continuous
function $f_{0}:\left[ 0,T\right] \rightarrow \mathcal{X} $ such that 
\begin{equation}  \label{eq31}
f_{0}\left( t\right) =g\left( t,f_{0}\left( t\right) \right) +\frac{1}{%
\Gamma \left( \alpha \right) }\int_{0}^{t}\psi ^{\prime }\left( s\right)
\left( \psi \left( t\right) -\psi \left( s\right) \right) ^{\alpha-1}K\left(
t,s,f\left( s\right) \right) ds
\end{equation}
and 
\begin{equation}  \label{a32}
\left\Vert f\left( t\right) -f_{0}\left( t\right) \right\Vert _{C_{1-\gamma
;\psi }}\leq \frac{1}{1-\left( L_{1}+L_{2}\right) L}\phi \left( t\right).
\end{equation}
\end{teorema}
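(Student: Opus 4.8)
The plan is to run the same contraction-mapping argument used in the previous theorem, but in a lighter setting: since Eq.(\ref{eq31}) is a genuine integral equation with no $\psi$-Hilfer derivative present, the generalized metric only needs to control the zeroth-order difference.

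First I would set $M=\{x:[0,T]\to\mathcal{X}:\ x\text{ is continuous}\}$ and define $d:M\times M\to[0,\infty]$ by
\[
d(x,y)=\inf\left\{C\in[0,\infty]:\ \|x(t)-y(t)\|_{C_{1-\gamma;\psi}}\le C\,\phi(t)\ \text{ for all }t\in[0,T]\right\}.
\]
Then I would check that $(M,d)$ is a generalized complete metric space in the sense of Definition \ref{def3}: the first two conditions are immediate, the triangle inequality follows by the contradiction argument already used above (if $d(x,y)>d(x,z)+d(z,y)$, choose $t_{0}$ witnessing the failure and apply the triangle inequality of $\|\cdot\|_{C_{1-\gamma;\psi}}$ at $t_{0}$), and completeness follows because $\phi$ is continuous and strictly positive on the compact interval $[0,T]$, hence bounded below by some $c>0$; therefore every $d$-Cauchy sequence is uniformly Cauchy, its uniform limit lies in $M$, and letting the second index tend to infinity in the defining inequality yields $d$-convergence.

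Next I would introduce the operator $\Lambda:M\to M$,
\[
\Lambda x(t)=g(t,x(t))+\frac{1}{\Gamma(\alpha)}\int_{0}^{t}\psi'(s)\,(\psi(t)-\psi(s))^{\alpha-1}\,K(t,s,x(s))\,ds,
\]
which maps $M$ into itself by continuity of $g$ and $K$, and whose fixed points are exactly the continuous solutions of Eq.(\ref{eq31}). The contraction estimate is the core step: assuming $d(x,y)\le C_{xy}$, so that $\|x(t)-y(t)\|_{C_{1-\gamma;\psi}}\le C_{xy}\phi(t)$ for every $t$, I would use Eq.(\ref{eq88}) and Eq.(\ref{eq29}) to pull out the Lipschitz constants $L_{1}$ and $L_{2}$, and then Eq.(\ref{eq90}) to absorb the resulting $\psi$-fractional integral of $\phi$ back into a multiple of $\phi$; this gives $\|\Lambda x(t)-\Lambda y(t)\|_{C_{1-\gamma;\psi}}\le(L_{1}+L_{2})L\,C_{xy}\,\phi(t)$, hence $d(\Lambda x,\Lambda y)\le(L_{1}+L_{2})L\,d(x,y)$, and $\Lambda$ is strictly contractive because $0<(L_{1}+L_{2})L<1$ by hypothesis.

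Finally I would invoke Theorem \ref{teo1} with $x=f$ and $k=0$: Eq.(\ref{eq30}) says precisely $\|f(t)-\Lambda f(t)\|_{C_{1-\gamma;\psi}}\le\phi(t)$, so that $d(\Lambda f,f)\le 1<\infty$, and the iterates $\Lambda^{n}f$ converge to the unique fixed point $f_{0}\in M^{\ast}=\{y\in M:\ d(\Lambda f,y)<\infty\}$ by items 1 and 2; this $f_{0}$ is the asserted solution of Eq.(\ref{eq31}). Item 3 then gives $d(f,f_{0})\le\frac{1}{1-(L_{1}+L_{2})L}\,d(\Lambda f,f)\le\frac{1}{1-(L_{1}+L_{2})L}$, which, read back through the definition of $d$, is exactly the bound Eq.(\ref{a32}). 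For the uniqueness clause I would argue as in the previous theorem: any continuous $h$ satisfying Eq.(\ref{eq31}) (and Eq.(\ref{a32})) automatically satisfies $\Lambda h=h$, and, using Eq.(\ref{eq88}), Eq.(\ref{eq29}), Eq.(\ref{eq90}) together with $d(f,h)<\infty$, it satisfies $d(\Lambda f,h)<\infty$; hence $h\in M^{\ast}$ and therefore $h=f_{0}$. I expect the two points that need care to be the completeness of $(M,d)$ (which rests on the strict positivity of the continuous $\phi$ on $[0,T]$) and organizing the contraction estimate so that the constant comes out exactly as $(L_{1}+L_{2})L$; the remainder is routine bookkeeping parallel to the previous proof.
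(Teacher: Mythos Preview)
Your proposal is correct and follows essentially the same route as the paper: the same space $M$ of continuous functions, the same generalized metric $d$ weighted by $\phi$, the same operator $\Lambda$ (which you write out explicitly, whereas the paper merely cross-references the earlier proof), the same contraction bound $d(\Lambda x,\Lambda y)\le (L_{1}+L_{2})L\,d(x,y)$, the same appeal to Theorem~\ref{teo1} with $d(\Lambda f,f)\le 1$, and the same uniqueness argument via $h\in M^{\ast}$. Your added remarks on completeness (using that $\phi$ is bounded below on $[0,T]$) make the argument slightly more self-contained than the paper's, but the underlying strategy is identical.
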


\begin{proof} Consider the following set
\begin{equation*}
M:=\left\{ x:\left[ 0,T\right] \rightarrow \mathcal{X} :\text{ }x\text{ is continuous}\right\} 
\end{equation*}
and define $d:M\times M\rightarrow \left[ 0,\infty \right] $ by\qquad 
\begin{equation*}
d\left( x,y\right) =\inf \left\{ C\in \left[ 0,\infty \right] :\left\Vert x\left( t\right) -y\left( t\right) \right\Vert _{C_{1-\gamma ;\psi }}\leq C\phi \left( t\right) ,\text{ }t\in \left[ 0,T\right] \right\} .
\end{equation*}

With a similar argument to the proof of Theorem 2, one can see that $\left( M,d\right) $ is a complete generalized metric space. Now define $\Lambda $ on $M$ as in the proof of Theorem \ref{teo2}, one can verify that for, any $x,y\in M$,
\begin{equation*}
d\left( \Lambda x,\Lambda y\right) \leq \left( L_{1}+L_{2}\right) L\text{ }d\left( x,y\right).
\end{equation*}

The fact that $0<\left( L_{1}+L_{2}\right) L<1$ implies that $\Lambda $ is strictly contractive. Also from Eq.(\ref{eq30}), we obtain $d\left( \Lambda f,f\right) \leq 1<\infty $ and so by Theorem \ref{teo1}, $\Lambda $ has a unique fixed point $f_{0\text{ }}$ in the set $M^{\ast }=\left\{ y\in M:d\left( \Lambda f,y\right) <\infty \right\} .$ Let $h$ be another continuous function satisfying Eq.(\ref{eq31}) and Eq.(\ref{a32}). Thus $ f\in M$, 
$$d\left(f,h\right) <\frac{1}{1-\left( L_{1}+L_{2}\right) L}$$
and
\begin{equation*}
h\left( t\right) =g\left( t,h\left( t\right) \right) +\frac{1}{\Gamma \left( \alpha \right) }\int_{0}^{t}\psi ^{\prime }\left( s\right) \left( \psi \left( t\right) -\psi \left( s\right) \right) ^{\alpha -1}K\left( t,s,h\left( s\right) \right) ds.
\end{equation*}

For the uniqueness of $h,$ it is enough to show that $h$ is a fixed point of $\Lambda $ and $h\in M^{\ast }$. Using Eq.(\ref{eq31}), we have $\Lambda h=h$. Also from Eq.(\ref{eq31}), the fact that
\begin{equation*}
d\left( f,h\right) \leq \frac{1}{1-\left( L_{1}+L_{2}\right) L},
\end{equation*}
by Eq.(\ref{eq88}), Eq.(\ref{eq29}) and Eq.(\ref{eq90}), we have
\begin{eqnarray*}
&&\left\Vert \Lambda f\left( t\right) -h\left( t\right) \right\Vert
_{C_{1-\gamma ;\psi }} \\
&=&\left\Vert 
\begin{array}{c}
g\left( t,f\left( t\right) \right) -\dfrac{1}{\Gamma \left( \alpha \right) }%
\displaystyle\int_{0}^{t}\psi ^{\prime }\left( s\right) \left( \psi \left(
t\right) -\psi \left( s\right) \right) ^{\alpha -1}K\left( t,s,h\left(
s\right) \right) ds \\ 
-g\left( t,h\left( t\right) \right) +\dfrac{1}{\Gamma \left( \alpha \right) }%
\displaystyle\int_{0}^{t}\psi ^{\prime }\left( s\right) \left( \psi \left(
t\right) -\psi \left( s\right) \right) ^{\alpha -1}K\left( t,s,f\left(
s\right) \right) ds%
\end{array}%
\right\Vert _{C_{1-\gamma ;\psi }} \\
&\leq &\left\Vert g\left( t,f\left( t\right) \right) -g\left( t,h\left(
t\right) \right) \right\Vert _{C_{1-\gamma ;\psi }} \\
&&+\frac{1}{\Gamma \left( \alpha \right) }\displaystyle\int_{0}^{t}\psi
^{\prime }\left( s\right) \left( \psi \left( t\right) -\psi \left( s\right)
\right) ^{\alpha -1}\left\Vert K\left( t,s,f\left( s\right) \right) -K\left(
t,s,h\left( s\right) \right) \right\Vert _{C_{1-\gamma ;\psi }}ds \\
&\leq &L_{1}\left\Vert f\left( t\right) -h\left( t\right) \right\Vert
_{C_{1-\gamma ;\psi }} \\
&&+\frac{L_{2}}{\Gamma \left( \alpha \right) }\displaystyle\int_{0}^{t}\psi
^{\prime }\left( s\right) \left( \psi \left( t\right) -\psi \left( s\right)
\right) ^{\alpha -1}\left\Vert f\left( s\right) -h\left( s\right)
\right\Vert _{C_{1-\gamma ;\psi }}ds \\
&\leq &L_{1}C_{fh}\phi \left( t\right) +L_{2}C_{fh}\phi \left( t\right) \leq \left( L_{1}+L_{2}\right) d\left( f,h\right) \phi \left( t\right)  \\
&\leq &\frac{\left( L_{1}+L_{2}\right) }{1-\left( L_{1}+L_{2}\right) L}\phi
\left( t\right) 
\end{eqnarray*}
which implies that $d\left( \Lambda f,h\right) <\infty $.
\end{proof}

\section{Application}

We recall that for a Banach's space $\mathcal{X} $, a one parameter family $%
\left\{U_{\alpha }\left( t\right) \right\} _{t\geq 0}$, $\alpha >0$ in $%
B\left(\mathcal{X} \right) ,$ the space of all bounded linear operators, is
called an $\alpha-$resolvent operator function if the following conditions
are satisfied \cite{chen}:

\begin{enumerate}
\item $U_{\alpha }\left( t\right) $ is strongly continuous on $\mathbb{R}
^{+}$ and $U_{\alpha }\left( 0\right) =I$;

\item $U_{\alpha }\left( s\right) U_{\alpha }\left( t\right) =U_{\alpha
}\left( t\right) U_{\alpha }\left( s\right) ,$ for all $s,t\geq 0$;

\item The functional equation 
\begin{equation*}
U_{\alpha }\left( s\right) I_{0+,t}^{\alpha ;\psi }U_{\alpha }\left(
t\right) -I_{0+,s}^{\alpha ;\psi }U_{\alpha }\left( s\right) U_{\alpha
}\left( t\right) =I_{0+,t}^{\alpha ;\psi }U_{\alpha }\left( t\right)
-I_{0+,s}^{\alpha ;\psi }U_{\alpha }\left( t\right)
\end{equation*}
holds for all $s,t\geq 0$.
\end{enumerate}

The generator $A$ of $U_{\alpha }$ is defined by 
\begin{equation*}
D\left( A\right) :=\left\{ x\in \mathcal{X} :\underset{t\rightarrow 0^{+}}{%
\lim }\frac{U_{\alpha }\left( t\right) x-x}{g_{\alpha +1}\left( t\right) }%
\text{ exists}\right\}
\end{equation*}
where $g_{\alpha +1}\left( t\right) =\dfrac{t^{\alpha }}{\Gamma \left(\alpha
+1\right) }$ and 
\begin{equation*}
Ax:=\underset{t\rightarrow 0^{+}}{\lim }\frac{U_{\alpha }\left( t\right) x-x 
}{g_{\alpha +1}\left( t\right) };\text{ }x\in D\left( A\right).
\end{equation*}

Note that, $\underset{t\rightarrow 0^{+}}{\lim }U_{\alpha }\left( t\right)
x=x$ for $x\in \mathcal{X}$.

\begin{exemplo}
Let $\mathcal{X}$ be a Banach's space, $A\in B\left( \mathcal{X}\right) $
with $\left\Vert A\right\Vert \leq 1,$ $T\in \left( 0,\infty \right) $ and $%
\Phi \left( \cdot \right) \in W^{1,1}\left( \mathbb{R}_{+},\mathbb{C} \right)
$, where $W^{1,1}$ is the Sobolev space. From \textrm{Corollary \ref{cor1}},
we know that there exists a unique solution $u\in C^{1}\left( \mathbb{R},%
\mathcal{X} \right) \cap C\left( \mathbb{R},\mathcal{X} \right) $ satisfying
the fractional Volterra integro-differential equation 
\begin{equation}  \label{eq34}
^{H}\mathbb{D}_{0+}^{\alpha ,\beta ;\psi }u\left( t\right) =Au\left(
t\right)+f\left( t\right) +\int_{0}^{t}\Phi \left( t-s\right) Au\left(
s\right) ds.
\end{equation}

First, we define $g:\left[ 0,T\right] \times \mathcal{X} \rightarrow 
\mathcal{X} $, and $K:\left[0,T\right] \times \left[ 0,T\right] \times 
\mathcal{X} \rightarrow \mathcal{X} $ by $g\left( t,x\right) =Ax+f\left(
t\right) $ and $K\left( t,s,x\right) =\Phi\left( t-s\right) Ax$.

Note that, 
\begin{equation*}
\left\Vert g\left( t,x\right) -g\left( t,y\right) \right\Vert _{C_{1-\gamma
;\psi }}=\left\Vert Ax-Ay\right\Vert _{C_{1-\gamma ;\psi }}\leq \left\Vert
A\right\Vert \left\Vert x-y\right\Vert _{C_{1-\gamma ;\psi }}
\end{equation*}
and on the other hand 
\begin{eqnarray*}
\left\Vert K\left( t,s,x\right) -K\left( t,s,y\right) \right\Vert
_{C_{1-\gamma ;\psi }} &=&\left\vert \Phi \left( t-s\right) \right\vert
\left\Vert Ax-Ay\right\Vert _{C_{1-\gamma ;\psi }}  \notag \\
&\leq &M\left\Vert A\right\Vert \left\Vert x-y\right\Vert _{C_{1-\gamma
;\psi }}
\end{eqnarray*}
for some $M>0$. Suppose 
\begin{equation*}
0<L<\frac{1-\left\Vert A\right\Vert }{\left\Vert A\right\Vert \left(
1+M+MT\right) },
\end{equation*}
$\alpha \geq \dfrac{1}{L}$ and $\phi \left( t\right) =\rho $ $e^{\alpha t},$ 
$\rho >0$. If 
\begin{equation*}
\left\Vert ^{H}\mathbb{D}_{0+}^{\alpha ,\beta ;\psi }u\left( t\right)
-Au\left(t\right) -f\left( t\right) -\int_{0}^{t}\Phi \left( t-s\right)
Au\left(s\right) ds\right\Vert _{C_{1-\gamma ;\psi }}\leq \phi \left(
t\right)
\end{equation*}
for an appropriate $f$ and $\Phi \left( \cdot \right) $, then with $%
L_{1}=\left\Vert A\right\Vert $ and $L_{2}=M\left\Vert A\right\Vert$, by 
\textnormal{\ Theorem \ref{teo2}}, there exists a unique solution $%
u_{0}\left( t\right) $ of \textnormal{Eq.(\ref{eq34})} such that 
\begin{eqnarray*}
&&\left\Vert ^{H}\mathbb{D}_{0+}^{\alpha ,\beta ;\psi }u\left( t\right) -%
\text{ }^{H}\mathbb{D}_{0+}^{\alpha ,\beta ;\psi }u_{0}\left( t\right)
\right\Vert_{C_{1-\gamma ;\psi }}+\left\Vert u\left( t\right) -u_{0}\left(
t\right)\right\Vert _{C_{1-\gamma ;\psi }}  \notag \\
&\leq &\frac{1+L}{1-L_{1}+\left( L_{1}+L_{2}+L_{2}L\right) L}\phi \left(
t\right).
\end{eqnarray*}

Thus we obtain the Ulam-Hyers stability of \textnormal{\ Eq.(\ref{eq34})}.
\end{exemplo}

\section{Concluding remarks}

The study of the stability of solutions of differential equations is
intensifying with the years and several researchers have presented new and
interesting results involving fractional derivatives. In this paper, we
presented an investigation of the Ulam-Hyers stability of the solution of a
fractional Volterra integro-differential equation by means of the Banach
fixed-point theorem. Besides that, we introduced the concept of $\alpha$%
-resolvent and presented, as an example, Ulam-Hyers stability in the Sobolev
space.

Also, the study of the stability of Ulam-Hyers and Ulam-Hyers-Rassias is
indeed interesting and is not restricted only to fractional differential
equations, in particular of the type given by Eq.(\ref{eq1}) and also to the
particular $\psi-$Hilfer fractional derivative. We can use another type of
derivative, for example, conformable derivative, which constitutes a change
of scale in relation to the integer-order derivative \cite{ZE3}, as well as
the fractional derivative with non-singular kernel, as proposed by
Caputo-Fabrizio \cite{ZE4}. With this, we can propose another study
involving stabilities of the type Ulam-Hyers, Ulam-Hyers-Rassias,
Ulam-Hyers-Bourgin, Aoki-Rassias, among others associated with the solutions
of the functional differential equations \cite{theory}.

\bibliography{ref}

\begin{thebibliography}{10}

 \bibitem{KSTJ}
A. A. Kilbas, H. M. Srivastava, J. J. Trujillo, Theory and {A}pplications of {F}ractional {D}ifferential {E}quations, Elsevier, Amsterdam, Vol.204, 2006.

 \bibitem{JERO}
K. Oldham, J. Spanier, The Fractional Calculus Theory and Applications of Differentiation and Integration to Arbitrary Order, Elsevier, Academic Press, New York, Vol.111, 1974.

 \bibitem{IP}
I. Podlubny, Fractional {D}ifferential {E}quations, {M}athematics in {S}cience and {E}ngineering, {A}cademic {P}ress, {S}an {D}iego, Vol.198, 1999.

   \bibitem{SAMKO}
S. G. Samko, A. A. Kilbas, O. I. Marichev, Fractional Integrals and Derivatives, Theory and Applications, Gordon and Breach, New York, 1993.

\bibitem{katu}
U. N. Katugampola, A new approach to generalized fractional derivatives, Bull. Math. Anal. Appl. 6~(4) (2014) 1--15.

\bibitem{almeida}
R. Almeida, A {C}aputo fractional derivative of a function with respect to another function, Commun. Nonlinear Sci. Numer. Simulat. 44 (2017) 460--481.
    
 \bibitem{ZE1}
J. Vanterler da C. Sousa, E. {Capelas} de Oliveira, On the $\psi$-{H}ilfer fractional derivative, Commun. Nonlinear Sci. Numer. Simulat. 60 (2018) 72--91.

  \bibitem{ZE2}
J. Vanterler da C. Sousa, E. {Capelas} de Oliveira, On a new operator in fractional calculus and applications, arXiv:1710.03712, (2018).    

 \bibitem{RHM}
R. Herrmann, Fractional {C}alculus: An {I}ntroduction for {P}hysicists, {W}orld {S}cientific {P}ublishing {C}ompany, {S}ingapore, 2011.

  \bibitem{JLEC1}
J. Vanterler da C. Sousa, E. {Capelas} de Oliveira, L. A. Magna, Fractional Calculus and the {ESR} test, AIMS Math. 2~(4) (2017) 692--705.
    
\bibitem{validation}
J. Vanterler da C. Sousa, Magnun N. N. Santos, L. A. Magna, E. Capelas de Oliveira, Validation of a fractional model for erythrocyte sedimentation rate, arXiv:1802.04340, (2018).
  
\bibitem{exis2}
Y. Zhou, Existence and uniqueness of solutions for a system of fractional differential equations, J. Fract. Calc. Appl. Anal. 12~(2) (2009) 195--204.


\bibitem{exis4}
M. Benchohra, J. E. Lazreg, Existence and uniqueness results for nonlinear implicit fractional differential equations with boundary conditions, Rom. J. Math. Comput. Sci. 4 (2014) 60--72.

\bibitem{exis5}
K. M. Furati, M. D. Kassim, Existence and uniqueness for a problem involving {H}ilfer fractional derivative, Comput. Math. Appl. 64~(6) (2012) 1616--1626.

\bibitem{exis6}
H. Gu, J. J. Trujillo, Existence of mild solution for evolution equation with {H}ilfer fractional derivative, Appl. Math. Comput. 257 (2015) 344--354.

\bibitem{exis7}
H. M{\^a}agli, M. Chaieb, A. Dhifli, S. Zermani, Existence and Boundary Behavior of Positive Solutions for a Semilinear Fractional Differential Equation, Mediterr. J. Math. 12~(4) (2015) 1265--1285.

\bibitem{exis8}
S. Abbas, M. Benchohra, J. E. Lagreg, A. Alsaedi, Y. Zhou, Existence and {U}lam stability for fractional differential equations of {H}ilfer-{H}adamard type, Adv. Diff. Equa. 2017~(1) (2017) pages 180.

\bibitem{exis9}
M. Yang, Q. Wang, Existence of mild solutions for a class of {H}ilfer fractional evolution equations with nonlocal conditions, Fract. Cal. Appl. Anal. 20~(3) (2017) 679--705.

\bibitem{exis10}
S. Abbas, M. Benchohra, J. R. Graef, J. Henderson, Implicit Fractional Differential and Integral Equations: Existence and Stability, Walter de Gruyter GmbH \& Co KG, Vol.26, 2018.

\bibitem{exis11}
Z. Yong, w. Jinrong, Z. Lu, Basic theory of fractional differential equations, World Scientific Publishing Company, Singapore, New Jersey, London and Hong Kong, 2016.

\bibitem{imp1}
J. Wang, M. Fe{\~{c}}kan, Y. Zhou, A survey on impulsive fractional differential equations, Fract. Cal. Appl. Anal. 19~(4) (2016) 806--831.

\bibitem{imp2}
A. Vinodkumar, K. Malar, M. Gowrisankar, P. Mohankumar, Existence, uniqueness and stability of random impulsive fractional differential equations, Acta Math. Scientia 36~(2) (2016) 428--442.

 \bibitem{imp3}
M. Fe{\~{c}}kan, J. R. Wang, Y. Zhou, On the new concept of solutions and existence results for impulsive fractional evolution equations, Dyn. Part. Diff. Equ. 8~(4) (2011) 345--361.

\bibitem{amann}
H. Amann, Ordinary Differential Equations: An Introduction to Nonlinear Analysis, Walter de Gruyter, Berlin, Vol.13, 1990.

\bibitem{chicone}
C. Chicone, Ordinary Differential Equations with Applications, Springer Science \& Business Media, New York, Vol.34, 2006.

\bibitem{hsu}
Sze-Bi Hsu, Ordinary Differential Equations with Applications, World Scientific Publishing Company,  New Jersey, Vol.21, 2013.

 \bibitem{est4}
D. Vivek, K. Kanagarajan, E. M. Elsayed, Some Existence and Stability Results for {H}ilfer-fractional Implicit Differential Equations with Nonlocal Conditions, Mediterr. J. Math. 15~(1) (2018) 15 pages.

\bibitem{est6}
M. Benchohra, J. E. Lazreg, On stability for nonlinear implicit fractional differential equations,  Le Matematiche 70~(2) (2015) 49--61.

\bibitem{est8}
J. Huang, Y. Li, Hyers--Ulam stability of delay differential equations of first order,  Mathematische Nachrichten 289~(1) (2016) 60--66.

\bibitem{est3}
J. Wang, X. Li, A uniform method to {U}lam--{H}yers stability for some linear fractional equations, Mediterr. J. Math. 13~(2) (2016) 625--635.

 \bibitem{est5}
M. Benchohra, J. E. Lazreg, Existence and {U}lam stability for nonlinear implicit fractional differential equations with {H}adamard derivative,  Studia Universitatis Babes-Bolyai, Mathematica 62~(1) (2017) 27--38.

\bibitem{est9}
J. Wang, L. Lv, Y. Zhou, Ulam stability and data dependence for fractional differential equations with {C}aputo derivative,  Electronic J. Qualitative Theory of Differential Equations 2011~(63) (2011) 1--10.

 \bibitem{est1}
J. Vanterler da C. Sousa, E. {Capelas} de Oliveira, Ulam–{H}yers stability of a nonlinear fractional {V}olterra integro-differential equation, Appl. Math. Lett. 81 (2018) 50--56.

   
 \bibitem{est2}
J. Vanterler da C. Sousa, E. {Capelas} de Oliveira, On the {U}lam-{H}yers-{R}assias stability for nonlinear fractional differential equations using the $\psi$-{H}ilfer operator, arXiv:1711.07339, (2017).

\bibitem{ser}
J. Vanterler da C. Sousa, E. {Capelas} de Oliveira, Ulam--Hyers--Rassias stability for a class of fractional integro-differential equations, submitted (2018).

\bibitem{janfada}
M. Janfada, G. Sadeghi, Stability of The {V}olterra integrodifferential equation, Folia Mathematica, 48~(1) (2013) 11--20.

\bibitem{fixed}
J. B. Diaz, B. Margolis, A fixed point theorem of the alternative, for contractions on a generalized complete metric space, Bull. Amer. Math. Soc., 74~(2) (1968) 305--309.

\bibitem{engel}
K. J. Engel, R. Nagel, One-parameter semigroups for linear evolution equations, Springer Science \& Business Media, New York, Vol.194, 1999.


\bibitem{chen}
C. Chen, M. Li, On fractional resolvent operator functions, Semigroup Forum, 80~(1) (2010) 121--142.

\bibitem{ZE3}
J. Vanterler da C. Sousa, E. Capelas de Oliveira, Mittag--{L}effler Functions and the Truncated $\mathcal{V}$-fractional Derivative, Mediterr. J. Math. 14~(6) (2017) pages 244.

\bibitem{ZE4}
J. Vanterler da C. Sousa, E. Capelas de Oliveira, A new fractional derivative of variable order with non-singular kernel and fractional differential equations, arXiv:1712.06506, (2018).

\bibitem{theory}
V. Lakshmikantham, Theory of fractional functional differential equations, Nonlinear Analysis: Theory, Methods \& Applications 69~(10) (2008) 3337--3343.
 




\end{thebibliography}
\bibliographystyle{plain}

\end{document}